\documentclass[12pt]{article}
\usepackage{fullpage}
\usepackage{amsmath}
\usepackage{amssymb}
\usepackage{amsthm}

\newtheorem{theorem}{Theorem}

\newtheorem{lemma}{Lemma}

\def\ord{ord}
\def\eps{\varepsilon}

\def\Z{\mathbb{Z}}

\title{The least primitive root modulo $p^{2}$}
\author{Bryce Kerr\footnote{Supported by Australian Research Council Discovery Project DP160100932.}\\ School of Science\\ The University of New South Wales Canberra, Australia \\
  b.kerr@adfa.edu.au
 \and  
  Kevin J. McGown\\ Department of Mathematics and Statistics\\ California State University, Chico\\
The University of New South Wales, Canberra\\
%Holt 181, 400 West First Street, Chico, California, 95929\\
kmcgown@csuchico.edu \and Tim Trudgian\footnote{Supported by Australian Research Council Future Fellowship FT160100094.}\\ School of Science\\ The University of New South Wales Canberra, Australia \\
  t.trudgian@adfa.edu.au}

\begin{document}

\maketitle
\begin{abstract}
\noindent
We provide an explicit estimate on the least primitive root mod $p^{2}$. We show, in particular, that every prime $p$ has a primitive root mod $p^{2}$ that is less than $p^{0.99}$.
\end{abstract}

\section{Introduction}

Let $p$ be an odd prime and let $g(p)$ and $h(p)$ denote the least primitive root modulo $p$ and modulo $p^{2}$, respectively. Burgess \cite{Burgess62} showed that $g(p)\ll p^{1/4 + \epsilon}$ and pointed out in \cite{Burgess71} that the same methods allow one to prove $h(p) \ll p^{1/2 + \epsilon}$. Cohen, Odoni and Stothers \cite{COS} improved this to $h(p) \ll p^{1/4 + \epsilon}$.

While explicit upper bounds on $g(p)$ have been given \cite{Cohen, CohenT, Hunter, MT}, we are unaware of any such bounds for $h(p)$. Trivially, we have $h(p)< p^{2}$, and, if we use an appropriate version of the P\'olya--Vinogradov inequality \cite{Sound}, we can make the estimate $h(p) < p^{1+ \epsilon}$ explicit. This however, is still unable to prove that $h(p) < p$ for all primes $p$.

Even this estimate is probably far from the truth.
If $j$ is a primitive root modulo $p$, then exactly one of the numbers $j+kp$ for $k=0,1,\dots,p-1$ fails to be a primitive root modulo~$p^2$.  As a consequence, one might expect that, heuristically, a primitive root modulo $p$ has a $(p-1)/p$ chance to be a primitive root modulo $p^2$; in particular, when $p$ is large, this is very likely to happen.
Accordingly, Paszkiewicz, see \cite{Pass} and \cite{Liver} proved that $g(p) = h(p)$ for all $p\leq 10^{12}$ except for 
\begin{equation}\label{sink}
p=40,487 \quad \textrm{and} \quad p=6,692,367,337.
\end{equation}

Since we do have good explicit bounds on $g(p)$ it seems reasonable to suggest that these hold for $h(p)$ also.
Indeed, Grosswald \cite{Grosswald81} conjectured that $g(p) < \sqrt{p} -2$ for all $p> 409$ and this is almost certainly true for $h(p)$ as well.
While we are unable to prove this, we can show that all primes $p$ have a primitive root modulo $p^{2}$ less than $p$, which is, in some sense,
the analogue of Grosswald's conjecture for primitive roots modulo $p^2$. In fact, we show slightly more than this in our main result.

\begin{theorem}\label{maintheorem}
We have $h(p) < p^{0.99}$ for all primes $p$.
\end{theorem}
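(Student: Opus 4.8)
The plan is to combine a character-sum sieve with the classical observation relating primitive roots mod $p$ and mod $p^2$. Recall that an integer $n$ coprime to $p$ is a primitive root modulo $p^2$ if and only if $n$ is a primitive root modulo $p$ \emph{and} $n^{p-1} \not\equiv 1 \pmod{p^2}$. The first condition is detected by the usual indicator
\[
\frac{\phi(p-1)}{p-1}\sum_{d \mid p-1} \frac{\mu(d)}{\phi(d)} \sum_{\chi_d} \chi_d(n),
\]
where the inner sum runs over characters mod $p$ of order exactly $d$, while the second condition can be detected by a character mod $p^2$ of order $p$: writing $\psi$ for such a character, $n^{p-1}\equiv 1 \pmod{p^2}$ exactly when $\psi(n)=1$, so $1-\frac{1}{p}\sum_{j=0}^{p-1}\psi(n)^j$ is the indicator that $n^{p-1}\not\equiv 1$. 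Multiplying the two indicators and summing over $n \le x$, I expect the main term to be of size $\frac{\phi(p-1)}{p-1}\cdot\frac{p-1}{p}\, x$, and the task reduces to showing the error term is smaller than this whenever $x = p^{0.99}$.

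\textbf{Key steps.} First, expand the product of indicators into a main term plus character sums. The ``diagonal'' contributions that survive are those where the mod-$p$ character is trivial and the mod-$p^2$ character is either trivial or has order $p$; these assemble into the main term above, up to manageable losses from the $\mu(d)/\phi(d)$ weights (bounded using $\sum_{d\mid p-1}\mu^2(d)/\phi(d) = (p-1)/\phi(p-1)$ and a dyadic/Rankin-trick control on $\sum 1/\phi(d)$ as in the standard treatments of $g(p)$). Second, bound the off-diagonal terms: each is a sum $\sum_{n\le x}\Theta(n)$ where $\Theta$ is a nontrivial character modulo $p$ or modulo $p^2$. For the characters mod $p^2$, one applies the Burgess bound for composite (prime-power) moduli — this is exactly the input Burgess used to get $h(p)\ll p^{1/2+\epsilon}$, but here we need an \emph{explicit} version valid in the range $x \approx p^{0.99}$, which is comfortably above the $p^{1/4+\epsilon}$ Burgess threshold. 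Third, combine everything: the number of character sums is $O(2^{\omega(p-1)}\cdot p)$ roughly, and each is $o(x/p)$, so after multiplying by the $\phi(p-1)/(p-1)$ prefactor the total error is $o(x\,\phi(p-1)/p)$ once $x\ge p^{0.99}$ — giving a positive count of primitive roots mod $p^2$ below $p^{0.99}$. Finally, handle small primes (where $2^{\omega(p-1)}$ and the explicit constants are unfavourable) by a direct computation or by invoking Paszkiewicz's verification that $g(p)=h(p)$ for $p\le 10^{12}$.

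\textbf{Main obstacle.} The crux is getting an explicit Burgess-type inequality for character sums to a prime-power modulus $p^2$ that is strong enough in the narrow window near $x=p^{0.99}$ to beat the $2^{\omega(p-1)}\cdot p$ factor coming from the sieve over divisors of $p-1$ and over characters of order $p$ mod $p^2$. The ordinary Burgess bound gives a saving of the form $x^{1-1/r}\,p^{(r+1)/(4r^2)\cdot 2}$-ish (with the modulus $p^2$), and one must optimise $r$ and track every constant; the delicate point is that the exponent $0.99$ leaves only a slim margin over $1$, so the $p$-factor from the order-$p$ characters mod $p^2$ must be absorbed by choosing $x$ just slightly below $p$ and $r$ moderately large. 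I would expect to spend most of the effort making this explicit Burgess estimate clean, then a secondary effort on the $\sum 1/\phi(d)$ bookkeeping, and finally a short computational lemma to clear the finitely many exceptional small primes. An alternative, possibly cleaner route is to bound separately: first show $g(p)<p^{0.98}$ explicitly (known-style arguments), then among the $O(p^{0.98})$ candidates count how many fail to be primitive roots mod $p^2$ using only the single order-$p$ character mod $p^2$ — this decouples the two sieves and may give better constants, at the cost of needing $g(p)$ small enough that $p^{0.98}\cdot(1/p)$-type losses are affordable.
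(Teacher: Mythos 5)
Your plan hinges on an ingredient that you never supply and that is not available off the shelf: an explicit Burgess-type bound for a primitive character modulo $p^2$ that is nontrivial (indeed saves more than the factor $2^{\omega(p-1)}$) at length $x=p^{0.99}=(p^2)^{0.495}$, uniformly for all $p$ above a computable threshold. P\'olya--Vinogradov modulo $p^2$ gives only about $p\log p\gg x$, so nothing short of genuine Burgess strength for the prime-power modulus will do, and making that explicit, with all constants tracked in the range $q=p^2\ge 10^{24}$, is a substantial project in itself rather than a citation; you correctly identify it as the crux but leave it unproved. Your error accounting is also wrong as stated: you claim there are $O(2^{\omega(p-1)}\cdot p)$ character sums ``each $o(x/p)$'', but $x/p=p^{-0.01}<1$, so no nonempty character sum can satisfy this. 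The correct bookkeeping must carry the weight $1/p$ coming from the $p$-th-power indicator $\frac{1}{p}\sum_{j=0}^{p-1}\psi^j(n)$, after which each sum to modulus $p^2$ only needs to be $\ll x/2^{\omega(p-1)}$; this is repairable, but it is exactly where the missing explicit Burgess input is forced.

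The paper avoids this obstacle entirely and never touches character sums modulo $p^2$. It lower-bounds the number of primitive roots mod $p$ up to $X=p^{0.99}$ that are coprime to $P_{16}=2\cdot3\cdot5\cdot7\cdot11\cdot13$ using only an explicit P\'olya--Vinogradov inequality modulo $p$ (adequate because $X$ is nearly $p$), and it upper-bounds the number of integers in $[1,X]$ coprime to $P_{16}$ that are $p$-th powers modulo $p^2$ by the Cohen--Odoni--Stothers observation that for such elements $t_1t_2\equiv t_3t_4\pmod p$ forces $t_1t_2=t_3t_4$, followed by the divisor bound $\tau(n)\le n^{1/4}$, valid once the small prime factors are sieved out; this yields $N_2(X)\le p^{1/2}X^{1/4}$, hence $N_1(X)-N_2(X)>0$ for $p\ge 10^{12}$, with Paszkiewicz's computations (plus a check of the two exceptional primes) covering $p<10^{12}$. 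Your ``decoupled'' alternative runs into the same wall, since counting the candidates with $n^{p-1}\equiv 1\pmod{p^2}$ via the single order-$p$ character again requires a mod-$p^2$ character sum bound at length $p^{0.98}$. To complete your argument you must either prove the explicit prime-power Burgess bound yourself or switch, as the paper does, to the combinatorial treatment of the $p$-th-power condition.
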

We remark that, just like with $g(p)$, we cannot expect to prove  $h(p) < p^{\alpha}$ for all primes $p$ and for any $\alpha < \log 2/\log 3 \approx 0.6309$
for the simple reason that $2$ is a primitive root mod $9$.
Almost certainly the statement $h(p)\leq p^{\log 2/\log 3}$ is true for all primes, as is the analogue of Grosswald's conjecture: that $h(p) < \sqrt{p} - 2$ for all $p> 409$.
%\footnote{\textbf{From TST:} I checked, in about 15 minutes on my laptop, that this Grosswald-ish conjecture is for for $p\leq p_{10^{6}}$. Probably no need to include this, though.}

%\section{Outline}

We follow the basic strategy of Cohen, Odoni, and Stothers~\cite{COS}.
Define $P_z=\prod_{p\leq z}p$.
Let $T_1(X)$ denote the positive integers $n\leq X$ satisfying $(n,P_z)=1$ that are primitive roots modulo $p$.
Let $T_2(X)$ denote the positive integers $n\leq X$ satisfying $(n,P_z)=1$ that are $p$-th powers modulo $p^2$.
We will write $N_1(X)=\#T_1(X)$ and $N_2(X)=\#T_2(X)$.
Any element belonging to $T_1\setminus T_2$ is a primitive root modulo $p^2$.
Consequently, if we can choose $z$ so that $N_1(X)-N_2(X)>0$, there exists a primitive root modulo $p^2$ less than $X$.
We seek a lower bound for $N_1(X)$ and an upper bound for $N_2(X)$; the following result supplies the latter.

\begin{theorem}\label{upperboundtheorem}
If $0<\eps<1$ and $z\geq 2^{1/\eps}$, then $N_2(X)\leq p^{1/2}X^{\eps}$.
\end{theorem}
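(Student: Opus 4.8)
The plan is to exploit the group structure. Since $n$ is a $p$-th power modulo $p^{2}$ precisely when $n^{p-1}\equiv 1\pmod{p^{2}}$, the residues counted by $T_2(X)$ all lie in the subgroup $H\leq(\Z/p^{2}\Z)^{\times}$ of $p$-th powers, and $|H|=p-1$. Moreover, in the range relevant to us (certainly $X\leq p$, as eventually we take $X\leq p^{0.99}$), every $n\leq X$ that is a $p$-th power mod $p^{2}$ is coprime to $p$: if $p\mid n$, then being a $p$-th power mod $p^{2}$ forces $p^{2}\mid n$, which is impossible for $0<n<p^{2}$. Hence $T_2(X)=\{\,n\leq X:(n,P_z)=1,\ n\bmod p^{2}\in H\,\}$, and each such $n$ reduces to an element of $(\Z/p^{2}\Z)^{\times}$.

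The key step is a doubling (Cauchy--Schwarz type) argument using that $H$ is a \emph{subgroup}. I would introduce the map
\[
\Phi:\ T_2(X)\times T_2(X)\ \longrightarrow\ H,\qquad \Phi(m,n)=mn\bmod p^{2},
\]
which is well defined because $m,n\bmod p^{2}\in H$ and $H$ is closed under multiplication. Since $\Phi$ lands in $H$, we get $N_2(X)^{2}=\#\bigl(T_2(X)\times T_2(X)\bigr)=\sum_{r\in H}\#\Phi^{-1}(r)$, so it suffices to bound each fibre. Fix $r\in H$ and suppose $(m,n)\in\Phi^{-1}(r)$. Because $m,n\leq X\leq p$ and $m,n$ are coprime to $p$, we have $m,n\leq p-1$, hence $mn\leq(p-1)^{2}<p^{2}$; as $mn\equiv r\pmod{p^{2}}$ this forces $mn=r$ as integers. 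Therefore $\#\Phi^{-1}(r)$ is at most the number of ordered factorisations of $r$, i.e.\ at most $\tau(r)$.

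Finally I would convert this divisor bound into a power of $X$ using the hypothesis $z\geq 2^{1/\eps}$. Every element of $T_2(X)$ has all prime factors exceeding $z$, hence so does $r=mn$ whenever $\Phi^{-1}(r)\neq\varnothing$; thus $r>z^{\Omega(r)}\geq 2^{\Omega(r)/\eps}$, giving $\Omega(r)<\eps\log r/\log 2$ and so
\[
\tau(r)\leq 2^{\Omega(r)}<r^{\eps}\leq(X^{2})^{\eps}=X^{2\eps}
\]
(the case $r=1$ being trivial). Summing over the $p-1$ elements of $H$ yields $N_2(X)^{2}\leq(p-1)X^{2\eps}$, and taking square roots gives $N_2(X)\leq(p-1)^{1/2}X^{\eps}<p^{1/2}X^{\eps}$.

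I expect the only point requiring care to be the identification of $\Phi^{-1}(r)$ with a set of genuine integer factorisations: this is why it is clean to restrict to $X\leq p$, so that products $mn$ do not wrap around modulo $p^{2}$. For larger $X$ the same argument gives $N_2(X)^{2}\leq(p-1)\lceil X^{2}/p^{2}\rceil X^{2\eps}$, which is still enough for the application but loses the tidy shape. The divisor estimate and the deduction $\tau(r)\leq X^{2\eps}$ from $z\geq 2^{1/\eps}$ are routine once the doubling is in place.
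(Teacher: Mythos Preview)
Your argument is correct (under the same implicit hypothesis $X\le p$ that the paper also uses), but it reaches the key inequality by a different and in fact more direct route than the paper. The paper proves a fourth-moment energy bound (Lemma~\ref{L:1}) via Cauchy--Schwarz: writing $r(\lambda)$ for the number of representations $xy\equiv\lambda\pmod{p^2}$ with $x,y\in T$, they apply Cauchy--Schwarz in the variable $x=\lambda\bmod p$ to obtain $|T|^{4}/p\le\#\{t_1t_2\equiv t_3t_4\pmod p\}$, and then invoke the $p$-th power property to lift the congruence mod $p$ to genuine equality $t_1t_2=t_3t_4$. This multiplicative energy is then bounded by $|T|^{2}\max\tau$, giving $|T|^{2}\le p\,X^{2\eps}$ after the divisor estimate (Lemma~\ref{L:2}). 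You instead exploit the subgroup structure of $H$ head-on: the image of $(m,n)\mapsto mn\bmod p^{2}$ has at most $|H|=p-1$ values, and each fibre has size at most $\tau(r)$ because $mn<p^{2}$ forces $mn=r$. This avoids Cauchy--Schwarz entirely and even gives the marginally sharper constant $p-1$ in place of $p$. Your divisor bound via $\tau(r)\le 2^{\Omega(r)}$ and $r\ge z^{\Omega(r)}$ is equivalent to the paper's Lemma~\ref{L:2}. The paper's energy formulation is the more standard template and would generalise more readily (for instance if $T$ merely lay in a coset rather than a subgroup, or if one wanted higher moments), but for the present statement your argument is both shorter and self-contained.
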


The next theorem immediately gives a lower bound on $N_1$ upon setting $u=P_z$.

\begin{theorem}\label{lowerboundtheorem}
Let $N(X)$ denote the number of primitive roots modulo $p$ less than $X$ that are coprime to $u$.
Let $e$ be an even divisor of $p-1$ and
let $p_1,\dots,p_s$ denote the primes dividing $p-1$ that do not divide $e$.
Set $\delta=1-\sum_{i=1}^sp_i^{-1}$.  Assume $\delta>0$.  Then
\[
  N(X)\geq \delta\theta(e)\left\{
  X
  \sum_{d|u}\frac{\mu(d)}{d}-2^{\omega(u)}\left[
  A(p)
  \left(2+\frac{s-1}{\delta}\right)2^{\omega(p-1)-s}p^{1/2}\log p + 1
  \right]
  \right\}
\]
In the above, $A(p)$ is the constant coming from the P\'olya--Vinogradov inequality (see Lemma~\ref{Lap}).
\end{theorem}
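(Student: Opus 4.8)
The plan is to follow the standard character approach to primitive roots, but to sieve \emph{completely} only by the primes dividing $e$, handling the remaining prime divisors $p_1,\dots,p_s$ of $p-1$ by a single Bonferroni step; this is precisely what holds the number of characters in play down to $2^{\omega(e)}=2^{\omega(p-1)-s}$ rather than $2^{\omega(p-1)}$. Fix a primitive root modulo $p$, write $\operatorname{ind} n$ for the index of $n$ when $(n,p)=1$, put $R_e=\operatorname{rad}(e)$ and $m=p_1\cdots p_s$, so that $\operatorname{rad}(p-1)=R_e m$ and $n$ is a primitive root mod $p$ exactly when $\gcd(\operatorname{ind} n,R_e)=1$ and $p_i\nmid\operatorname{ind} n$ for every $i$. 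Since $\prod_{i=1}^s\bigl(1-\mathbf 1[p_i\mid\operatorname{ind} n]\bigr)\ge 1-\sum_{i=1}^s\mathbf 1[p_i\mid\operatorname{ind} n]$ and the factor $\mathbf 1[\gcd(\operatorname{ind} n,R_e)=1]$ is nonnegative, summing over $n\le X$ with $(n,u)=1$ gives
\[
  N(X)\ \ge\ S_e\ -\ \sum_{i=1}^s S_{e,i},
\]
where $S_e=\#\{\,n\le X:(n,u)=1,\ \gcd(\operatorname{ind} n,R_e)=1\,\}$ and $S_{e,i}$ counts those $n$ for which in addition $p_i\mid\operatorname{ind} n$ (all such $n$ automatically satisfy $(n,p)=1$).

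Next I would expand $S_e$ by the characters modulo $p$. By M\"obius inversion and orthogonality, $\mathbf 1[\gcd(\operatorname{ind} n,R_e)=1]=\sum_{d\mid R_e}\frac{\mu(d)}{d}\sum_{\chi^d=\chi_0}\chi(n)$, where $\chi_0$ is the principal character mod $p$, and the crucial move is to regroup this \emph{by the character}: a $\chi$ of order $t\mid R_e$ receives coefficient $\mu(t)\theta(e)/\varphi(t)$ (here $\theta(e)=\prod_{q\mid e}(1-1/q)$), so the total mass of the coefficients, summed over the $\varphi(t)$ characters of each order $t\mid R_e$, is $\theta(e)\,2^{\omega(e)}$, with $\theta(e)$ on $\chi_0$ and $\theta(e)(2^{\omega(e)}-1)$ on the non-principal characters. (A term-by-term bound in $d$ would give mass only $\sum_{d\mid R_e}d$, and the argument would collapse.) The principal part of $S_e$ is $\theta(e)\#\{n\le X:(n,u)=1,\ p\nmid n\}\ge\theta(e)\bigl(X\sum_{d\mid u}\mu(d)/d-2^{\omega(u)}\bigr)$ by an elementary sieve (multiples of $p$ are harmless, e.g.\ for $X\le p$), while each non-principal character sum, after a $\mu$-sieve for coprimality to $u$ costing a factor $2^{\omega(u)}$ and one application of P\'olya--Vinogradov (Lemma~\ref{Lap}), is at most $2^{\omega(u)}A(p)p^{1/2}\log p$. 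Hence
\[
  S_e\ \ge\ \theta(e)\Bigl(X\sum_{d\mid u}\tfrac{\mu(d)}{d}-2^{\omega(u)}\Bigr)\ -\ \theta(e)\,2^{\omega(p-1)-s}2^{\omega(u)}A(p)p^{1/2}\log p.
\]

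The step that brings out the factor $\delta$ is to treat $S_{e,i}$ not crudely but through $\mathbf 1[p_i\mid\operatorname{ind} n]=\tfrac1{p_i}\chi_0(n)+\tfrac1{p_i}\sum_{\ord(\psi)=p_i}\psi(n)$, which gives
\[
  S_{e,i}\ =\ \tfrac1{p_i}S_e\ +\ \tfrac1{p_i}\sum_{\ord(\psi)=p_i}\ \sum_{\substack{n\le X\\(n,u)=1}}\mathbf 1[\gcd(\operatorname{ind} n,R_e)=1]\,\psi(n).
\]
In each twisted sum every character that occurs has order divisible by $p_i$, hence is non-principal, so the sum is $\le 2^{\omega(u)}A(p)p^{1/2}\log p$ with total coefficient mass $\theta(e)2^{\omega(e)}$ for each of the $\varphi(p_i)=p_i-1$ choices of $\psi$; dividing by $p_i$ contributes a factor $\tfrac{p_i-1}{p_i}$. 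Therefore $N(X)\ge S_e-\sum_i S_{e,i}=\bigl(1-\sum_i 1/p_i\bigr)S_e-(\text{twisted sums})=\delta S_e-(\text{twisted sums})$, the twisted sums totalling at most $(s-1+\delta)\,\theta(e)2^{\omega(p-1)-s}2^{\omega(u)}A(p)p^{1/2}\log p$ by $\sum 1/p_i=1-\delta$. Substituting the lower bound for $S_e$, the two $A(p)p^{1/2}\log p$-errors combine as $\bigl(\delta+(s-1+\delta)\bigr)\theta(e)2^{\omega(p-1)-s}2^{\omega(u)}=(2\delta+s-1)\,\theta(e)2^{\omega(p-1)-s}2^{\omega(u)}$, and pulling $\delta\theta(e)$ out of the whole expression (using $\tfrac{2\delta+s-1}{\delta}=2+\tfrac{s-1}{\delta}$) gives exactly the claimed bound.

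The main obstacle is this bookkeeping: Bonferroni must be confined to the nonnegative $e$-part, the character expansion must be regrouped by character rather than by modulus so that the mass stays $\theta(e)2^{\omega(p-1)-s}$, and one must notice that twisting by a character of order $p_i$ makes \emph{every} surviving character non-principal — which is exactly what makes the ``$\delta$-sieve'' on $S_{e,i}$ genuinely gain — after which the error from $S_e$ (one $\delta\theta(e)2^{\omega(p-1)-s}$ after the passage $N(X)\ge\delta S_e-\cdots$) and the twisted-sum error $(s-1+\delta)\theta(e)2^{\omega(p-1)-s}$ assemble into $\delta\theta(e)\bigl(2+\tfrac{s-1}{\delta}\bigr)2^{\omega(p-1)-s}$. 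The hypotheses $2\mid e$ and $\delta>0$ enter only to keep $R_e\ge 2$ and the stated inequality non-vacuous.
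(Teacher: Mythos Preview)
Your argument is correct and is essentially the paper's own proof: the paper packages the Bonferroni step and the $e$-character expansion into the pointwise sieve inequality of Lemma~\ref{lemmasieve} and then sums over $n\le X$ with $(n,u)=1$, whereas you derive that same sieve from scratch via $N(X)\ge S_e-\sum_i S_{e,i}$ and the identity $S_{e,i}=\tfrac{1}{p_i}S_e+(\text{twisted sums})$, but the bookkeeping (mass $\theta(e)2^{\omega(e)}$ on the characters, the $2^{\omega(u)}$ from the $\mu$-sieve before P\'olya--Vinogradov, and $\sum_i\theta(p_i)=s-1+\delta$) is identical and leads to the same constant $2+\tfrac{s-1}{\delta}$.
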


Using Theorems \ref{upperboundtheorem} and \ref{lowerboundtheorem},
we will ultimately show that when $z=16$, $X=p^{0.99}$, one has $N_1(X)-N_2(X)>0$ when $p\geq 10^{12}$.
In light of the computations of Paszkiewicz~\cite{Pass}, and a trivial calculation on the two exceptional primes in (\ref{sink}), this is enough to prove Theorem~\ref{maintheorem}.
In Section~\ref{prelimsection} we quote some preliminary results necessary for our proofs
and in Section~\ref{proofsection} we give proofs of Theorems~\ref{maintheorem},~\ref{upperboundtheorem}, and~\ref{lowerboundtheorem}.

\section{Preliminaries}\label{prelimsection}

We let $f(n)$ denote the primitive root indicator function.
%The following result is well known.
%\begin{lemma}
%\label{lem:primrootind}
%Let $p$ be prime and $(n,p)=1$. We have 
%\begin{align*}
%f(n)=\frac{\phi(p-1)}{p-1}\left(1+\sum_{\substack{d|p-1 \\ d>1}}\frac{\mu(d)}{\phi(d)}\sum_{\substack{\chi \\ \text{ord}(\chi)=d}}\chi(n) \right).
%\end{align*}
%\end{lemma}
%
We make use of the sieve
employed by Cohen, Olivera e Silva, and Trudgian~\cite{Cohen}
in the form of the following result.

\begin{lemma}\label{lemmasieve}
Let $e$ be an even divisor of $p-1$ and
let $p_1,\dots,p_s$ denote the primes dividing $p-1$ that do not divide $e$.
Set $\delta=1-\sum_{i=1}^sp_i^{-1}$.  Assume $\delta>0$.
Then we have
\begin{eqnarray*}
\frac{f(n)}{\delta\theta(e)}
\geq
1
+\frac{1}{\delta}\sum_{i=1}^s
\theta(p_i)
\sum_{\substack{d|e}}
\frac{\mu(p_i d)}{\phi(p_i d)}
%\sum_\chi
\sum_{\substack{\chi\\ \ord(\chi) = p_{i}d}}
\chi(n)
+
\sum_{\substack{d|e\\d>1}}
\frac{\mu(d)}{\phi(d)}
%\sum_\chi
\sum_{\substack{\chi\\ \ord(\chi) = d}}
\chi(n)
\,.
\end{eqnarray*}
\end{lemma}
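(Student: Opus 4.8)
The plan is to start from the classical character formula for the primitive root indicator and reorganise it around the chosen divisor $e$. Throughout, $q$ denotes a prime and $\Sigma_m=\sum_{\ord(\chi)=m}\chi(n)$, the sum over Dirichlet characters modulo $p$ of exact order $m\mid p-1$. Recall that $n$ (coprime to $p$) is a primitive root modulo $p$ precisely when it fails to be a $q$-th power modulo $p$ for every prime $q\mid p-1$, and that counting solutions of $x^{q}\equiv n\pmod p$ by characters gives $1+\Sigma_q=q$ if $n$ is a $q$-th power modulo $p$ and $1+\Sigma_q=0$ otherwise; hence the indicator that $n$ is \emph{not} a $q$-th power equals $\theta(q)-q^{-1}\Sigma_q$. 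Multiplying these over $q\mid p-1$ yields $f(n)=\theta(p-1)\prod_{q\mid p-1}\bigl(1-\tfrac{1}{q-1}\Sigma_q\bigr)$, and expanding the product gives the familiar identity $f(n)=\theta(p-1)\sum_{d\mid p-1}\tfrac{\mu(d)}{\phi(d)}\Sigma_d$. Because the characters modulo $p$ form a cyclic group, $\Sigma_{ab}=\Sigma_a\Sigma_b$ whenever $a,b$ are coprime divisors of $p-1$; I will use this repeatedly.

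Since $p_1,\dots,p_s$ are exactly the primes dividing $p-1$ but not $e$, splitting the product over primes $q\mid p-1$ into those dividing $e$ and the $p_i$, and using $\theta(p-1)=\theta(e)\prod_i\theta(p_i)$ together with $\theta(p_i)=1-p_i^{-1}$, produces the exact identity
\[
  f(n)=\theta(e)\,W(n)\prod_{i=1}^{s}\frac{p_i-1-\Sigma_{p_i}}{p_i},\qquad
  W(n)=\sum_{d\mid e}\frac{\mu(d)}{\phi(d)}\,\Sigma_d .
\]
Two remarks make the rest work. First, each factor $(p_i-1-\Sigma_{p_i})/p_i$ is precisely the indicator $e_{p_i}(n)\in\{0,1\}$ that $n$ is not a $p_i$-th power modulo $p$, and $1-e_{p_i}(n)=p_i^{-1}(1+\Sigma_{p_i})$. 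Second, $W(n)=\prod_{q\mid e}\bigl(1-\tfrac{1}{q-1}\Sigma_q\bigr)$, and since each factor is $\tfrac{q}{q-1}$ times a non-residue indicator, $W(n)\ge0$ (in fact $\theta(e)W(n)\in\{0,1\}$).

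Now invoke the elementary bound $\prod_i e_{p_i}(n)\ge 1-\sum_i\bigl(1-e_{p_i}(n)\bigr)$, valid because each $e_{p_i}(n)\in[0,1]$; its right-hand side equals $1-\sum_i p_i^{-1}-\sum_i p_i^{-1}\Sigma_{p_i}=\delta-\sum_i p_i^{-1}\Sigma_{p_i}$. Multiplying through by $\theta(e)W(n)\ge0$ and using the displayed identity gives $f(n)\ge\theta(e)W(n)\bigl(\delta-\sum_i p_i^{-1}\Sigma_{p_i}\bigr)$. Dividing by $\delta\theta(e)>0$, writing $W(n)=1+\sum_{d\mid e,\,d>1}\tfrac{\mu(d)}{\phi(d)}\Sigma_d$, and using $\mu(p_i d)=-\mu(d)$, $\phi(p_i d)=(p_i-1)\phi(d)$, $\Sigma_{p_i d}=\Sigma_{p_i}\Sigma_d$ for $d\mid e$ to rewrite $-p_i^{-1}W(n)\Sigma_{p_i}=\theta(p_i)\sum_{d\mid e}\tfrac{\mu(p_i d)}{\phi(p_i d)}\Sigma_{p_i d}$, one lands exactly on the claimed inequality.

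The character-counting identities and the closing rearrangement are routine bookkeeping; the step that demands care — and, to my mind, the crux — is that the inequality for $\prod_i e_{p_i}(n)$ may be multiplied by $\theta(e)W(n)$ only because that quantity is provably nonnegative, which in turn hinges on the clean multiplicative factorisation of $W(n)$ over the primes dividing $e$. The hypothesis $\delta>0$ is equally indispensable: without it the union-type bound could be vacuous and the final division by $\delta\theta(e)$ illegitimate.
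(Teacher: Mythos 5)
Your proof is correct. Note that the paper does not prove Lemma~\ref{lemmasieve} at all: it is quoted as a known result from Cohen, Oliveira e Silva and Trudgian~\cite{Cohen}, so there is no in-paper argument to compare against. Your reconstruction is essentially the standard sieve proof from that line of work, just packaged differently: writing $E_m(n)$ for the indicator that $n$ is not a $q$-th power modulo $p$ for any prime $q\mid m$, the usual route is the inclusion--exclusion inequality $f(n)=E_{p-1}(n)\ge\sum_{i=1}^s E_{ep_i}(n)-(s-1)E_e(n)$ followed by the character expansion of each $E_m$; your version multiplies the Bonferroni bound $\prod_i e_{p_i}(n)\ge 1-\sum_i\bigl(1-e_{p_i}(n)\bigr)$ by $E_e(n)=\theta(e)W(n)\ge 0$, which is the same inequality in disguise, since $E_{ep_i}(n)=E_e(n)\,e_{p_i}(n)$. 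All the identities you invoke check out: $1+\Sigma_q$ counts solutions of $x^q\equiv n$, $\Sigma_{ab}=\Sigma_a\Sigma_b$ for coprime $a,b\mid p-1$, $\mu(p_id)=-\mu(d)$ and $\phi(p_id)=(p_i-1)\phi(d)$ for $d\mid e$ (the non-squarefree $d$ dying via $\mu$), and the two points you flag as the crux --- nonnegativity of $\theta(e)W(n)$, which is a $\{0,1\}$-valued indicator, and $\delta>0$ --- are indeed exactly what legitimises the final multiplication and division. One small caveat: the inequality requires $p\nmid n$ (if $p\mid n$ the left side is $0$ while the right side is $1$), an assumption you state and which is harmless in the paper's application since there $n\le X<p$; the evenness of $e$ plays no role in the inequality itself.
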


The following explicit Polya-Vinogradov is due to Frolenkov and Soundararajan~\cite{Sound}.
\begin{lemma}
\label{lem:pv}
Let $q$ be an integer and $\chi$ a primitive character mod $q$. If $q\ge 1200$ then for any $M$ we have 
\begin{align*}
\left|\sum_{n\le M}\chi(n)\right|\le q^{1/2}\left(\frac{2}{\pi^2}\log{q}+1\right).
\end{align*}
%In particular, if $q\ge 10^{10}$ then 
%\begin{align*}
%\left|\sum_{n\le M}\chi(n)\right|\le 1.215\cdot \frac{2\log{q}}{\pi^2}\,q^{1/2}.
%\end{align*}
\end{lemma}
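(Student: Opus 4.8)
The plan is to run the classical completion argument, but to keep track of the oscillation that survives completion so as to recover the sharp constant. Throughout write $e(x)=e^{2\pi i x}$, and let $\tau(\chi)=\sum_{a\bmod q}\chi(a)e(a/q)$ be the Gauss sum; since $\chi$ is primitive we have $|\tau(\chi)|=q^{1/2}$ and $\chi(n)=\tau(\bar\chi)^{-1}\sum_{a\bmod q}\bar\chi(a)e(an/q)$ for every integer $n$. First I would complete the character sum: substituting this identity (equivalently, expanding the indicator of $\{1,\dots,M\}$ into additive characters mod $q$) and summing the resulting geometric progressions yields the exact identity
\[
\sum_{n\le M}\chi(n)=\frac{\tau(\chi)}{q}\sum_{a=1}^{q-1}\bar\chi(a)\,\frac{e(-aM/q)-1}{1-e(a/q)}.
\]

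Next I would separate according to the parity $\chi(-1)=\pm1$, since the two cases behave differently. When $\chi$ is even, $\bar\chi(q-a)=\bar\chi(a)$, so the summands attached to $a$ and $q-a$ are complex conjugates of each other; grouping them in pairs leaves a sum over $1\le a\le(q-1)/2$ of twice the real part of each summand, and an elementary trigonometric simplification gives
\[
\sum_{n\le M}\chi(n)=\frac{\tau(\chi)}{q}\sum_{1\le a\le(q-1)/2}\bar\chi(a)\left(\frac{\sin\bigl(\pi a(2M+1)/q\bigr)}{\sin(\pi a/q)}-1\right)
\]
for odd $q$, with a trivial modification when $q$ is even. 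The gain here is that the range of summation has been halved. The odd case is handled the same way (one retains imaginary parts and a cosine numerator appears) and produces a smaller constant; since the even case is the worst, it is the one that dictates the constant $2/\pi^2$ in the statement.

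Taking absolute values, using $|\bar\chi(a)|\le1$, and applying the triangle inequality now reduces matters to the arithmetic estimate
\[
\left|\sum_{n\le M}\chi(n)\right|\le\frac{1}{q^{1/2}}\left(\sum_{1\le a\le(q-1)/2}\frac{\bigl|\sin(\pi a(2M+1)/q)\bigr|}{\sin(\pi a/q)}+\frac{q-1}{2}\right).
\]
Here bounding $\lvert\sin\rvert\le1$ and using $\sum_{a\le q/2}1/\sin(\pi a/q)=q\pi^{-1}\log q+O(q)$ already yields a Pólya--Vinogradov bound with constant $1/\pi$. To push the constant down to $2/\pi^2$ I would not discard the numerator: inserting the Fourier expansion $\lvert\sin 2\pi t\rvert=\tfrac2\pi-\tfrac4\pi\sum_{k\ge1}(4k^2-1)^{-1}\cos4\pi k t$, the constant term $\tfrac2\pi$ against the cosecant sum produces exactly the main term $\tfrac{2}{\pi^2}q^{1/2}\log q$, while the surviving oscillatory terms reduce, via $1/\sin\theta=\cot(\theta/2)-\cot\theta$, to finite cotangent sums with (near-)closed evaluations; these I would estimate by truncating the $k$-sum at $k\asymp\log q$ and bounding the tail through $\sum_{k>K}(4k^2-1)^{-1}\ll1/K$. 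I expect this last estimate to be the crux and the main obstacle: one must show that the oscillatory contribution does not exceed $O(q)$ (or at any rate does not enlarge the constant past $2/\pi^2$), and a careless use of the triangle inequality there costs a factor of about two and only delivers $4/\pi^2$, so the sign information in the cotangent sums has to be exploited.

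Putting everything together gives $\bigl|\sum_{n\le M}\chi(n)\bigr|\le\tfrac{2}{\pi^2}q^{1/2}\log q+O(q^{1/2})=q^{1/2}\bigl(\tfrac{2}{\pi^2}\log q+O(1)\bigr)$, and it remains only to keep explicit track of the accumulated lower-order constant and to check that the threshold $q\ge1200$ is enough to bring it down to $1$; together with the separate (and easier) treatment of the odd characters, this yields the stated inequality.
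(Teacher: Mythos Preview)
The paper does not prove this lemma at all: it is quoted verbatim as a result of Frolenkov and Soundararajan, so there is no ``paper's own proof'' to compare against. Your proposal is therefore not an alternative argument but an attempt to supply a proof where the paper simply cites one.

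As a sketch of the Frolenkov--Soundararajan bound your outline is broadly reasonable (Gauss-sum expansion, folding by parity, inserting the Fourier series of $|\sin|$), but by your own admission it is incomplete at exactly the decisive point. You write that a careless triangle inequality yields only $4/\pi^2$ and that ``the sign information in the cotangent sums has to be exploited'' to reach $2/\pi^2$; however you do not actually carry this out, nor do you verify that the accumulated lower-order constant is at most $1$ once $q\ge 1200$. Those two steps are the entire content of the lemma beyond the classical $O(q^{1/2}\log q)$ bound, and an explicit result of this type stands or falls on precisely that bookkeeping. So while the scaffolding is correct, what you have is a plan rather than a proof; for the purposes of the present paper, citing \cite{Sound} (as the authors do) is the appropriate move.
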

We note that a slightly sharper version of Lemma \ref{lem:pv} has been given by Lapkova \cite{Lap}. We also note that, as we only need to consider sums of the form $n\leq M$ (and not $H< n \leq M+H$) we can, via an observation of Pomerance~\cite{Pomerance}, halve the bounds for even characters. Taking, the worst remaining case (when $\chi$ is odd) from \cite[Lem.\ 3A]{Lap} gives the following.
\begin{lemma}\label{Lap}
For $q\geq q_{0}>1$ we have
\begin{equation}
\left|\sum_{n\le M}\chi(n)\right|\le A(q_{0}) \sqrt{q} \log q,
\end{equation}
where $A(q_{0}) = \frac{1}{2\pi} + 0.8204/q_{0}^{1/2} + 1.0285/(q_{0}^{1/2} \log q_{0}).$
\end{lemma}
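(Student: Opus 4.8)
The plan is to obtain Lemma~\ref{Lap} by repackaging the explicit Pólya--Vinogradov estimates of Lapkova~\cite{Lap} and Pomerance~\cite{Pomerance}; no genuinely new analytic input is required, only bookkeeping of constants together with an elementary monotonicity argument. First I would quote Lapkova's \cite[Lem.\ 3A]{Lap}: for a primitive character $\chi$ modulo $q$ (with $q$ in the relevant range) and any $M$, it supplies a bound of the shape
\[
\left|\sum_{n\le M}\chi(n)\right|\le c_1(\chi)\,q^{1/2}\log q+c_2\log q+c_3,
\]
in which the leading coefficient $c_1(\chi)$ depends only on the parity of $\chi$ and $c_2,c_3$ are absolute.

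The second step is to exploit the fact that in our applications we only ever meet initial segments $\sum_{n\le M}\chi(n)$, never general intervals $\sum_{M<n\le M+H}$. Here I would invoke Pomerance's observation~\cite{Pomerance}: for even $\chi$ the symmetry $\chi(q-n)=\chi(n)$ makes the constant term in the Pólya/Fourier expansion of $\sum_{n\le M}\chi(n)$ vanish, which halves the leading constant (and shrinks the lower-order terms) in the even case. One then checks numerically that, after this halving, the even-character bound is dominated term by term by the bound for odd $\chi$, whose leading constant is $\tfrac{1}{2\pi}$. Thus the odd case is the extremal one, and it suffices to establish the claim with the odd-character estimate
\[
\left|\sum_{n\le M}\chi(n)\right|\le \frac{1}{2\pi}\,q^{1/2}\log q+0.8204\,\log q+1.0285
\]
on the right-hand side.

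The final step is purely formal. Factoring $q^{1/2}\log q$ out gives
\[
\left|\sum_{n\le M}\chi(n)\right|\le q^{1/2}\log q\left(\frac{1}{2\pi}+\frac{0.8204}{q^{1/2}}+\frac{1.0285}{q^{1/2}\log q}\right),
\]
and since, for $q>1$, the quantity $\dfrac{0.8204}{q^{1/2}}+\dfrac{1.0285}{q^{1/2}\log q}=q^{-1/2}\bigl(0.8204+1.0285/\log q\bigr)$ is a product of two positive decreasing functions of $q$, it is itself decreasing there. Replacing $q$ by the lower bound $q_0$ in the parenthesised expression therefore only increases the right-hand side and produces exactly $A(q_0)\,q^{1/2}\log q$, which is the assertion of the lemma. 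I expect the only delicate points to be the faithful extraction of the numerical constants $0.8204$ and $1.0285$ from \cite[Lem.\ 3A]{Lap} and the verification that, after Pomerance's halving, the even-character bound really is dominated by the odd-character bound throughout the range $q\ge q_0$; once those are checked, the remainder of the argument is the elementary manipulation above.
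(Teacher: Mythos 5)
Your proposal matches the paper exactly: the paper gives no independent proof of this lemma, but obtains it precisely as you describe, by quoting Lapkova's \cite[Lem.\ 3A]{Lap}, invoking Pomerance's observation \cite{Pomerance} to halve the even-character bound for initial segments $n\le M$, and then taking the (worst) odd-character constant, which after the elementary factoring/monotonicity step yields $A(q_0)\sqrt{q}\log q$. The only caveat, which you already flag, is the faithful extraction of the numerical constants $0.8204$ and $1.0285$ from Lapkova's statement; otherwise the argument is the same.
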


 The following is due to Rosser and Schoenfeld~\cite{RS}.
\begin{lemma}
\label{lem:eulerphi}
For $n\ge 2$ we have 
\begin{align*}
\frac{n}{\phi(n)}\le e^{\gamma}\log\log{n}\left(1+\frac{2.51}{\log\log{n}} \right).
\end{align*}
In particular, if $n\ge 10^{12}$ then 
\begin{align*}
\frac{n}{\phi(n)}\le 1.8e^{\gamma}\log\log{n}.
\end{align*}
\end{lemma}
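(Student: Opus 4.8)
## Proof proposal for Lemma~\ref{lem:eulerphi}

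The plan is to establish the first inequality directly from the classical Mertens-type product formula for $n/\phi(n)$, following Rosser and Schoenfeld~\cite{RS}, and then to specialize. Recall that $n/\phi(n) = \prod_{p\mid n}(1-1/p)^{-1}$, and this product is largest, for a given number of prime factors, when the primes are the smallest possible. So the first step is to reduce to the case where $n$ is a \emph{primorial}: if $n$ has $k$ distinct prime factors, then $n/\phi(n) \le P_k/\phi(P_k)$ where $P_k = \prod_{i\le k} p_i$ is the product of the first $k$ primes, and moreover $\log\log n \ge \log\log P_k$ once $n \ge P_k$ (one must check the small cases separately, but for $k\ge 1$ this is routine since $n$ has $k$ distinct primes forces $n \ge P_k$). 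Thus it suffices to prove the bound for $n = P_k$.

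For $n = P_k$, the second step is to invoke the effective form of Mertens' third theorem from \cite{RS}: there is an explicit constant (Rosser--Schoenfeld give $\prod_{p\le x}(1-1/p)^{-1} \le e^{\gamma}\log x \,(1 + 1/\log^2 x)$ for $x \ge 285$, with a slightly different shape for smaller $x$), and combining this with $\vartheta(p_k) = \log P_k$ and Chebyshev-type bounds relating $p_k$, $\vartheta(p_k)$, and $\log\log P_k$ yields the stated inequality $n/\phi(n) \le e^{\gamma}\log\log n\,(1 + 2.51/\log\log n)$ for all $n\ge 2$ after absorbing lower-order terms into the constant $2.51$. The small values of $n$ (those below the threshold where the asymptotic product bound kicks in) are handled by a finite check, which is where the explicit constant $2.51$ is calibrated.

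For the second, simpler assertion, suppose $n \ge 10^{12}$. Then $\log\log n \ge \log\log 10^{12} = \log(12\log 10) \approx \log(27.6) \approx 3.32$, so $2.51/\log\log n \le 2.51/3.32 \approx 0.756$, giving $1 + 2.51/\log\log n \le 1.76 \le 1.8$. Plugging this into the first inequality gives $n/\phi(n) \le 1.8\, e^{\gamma}\log\log n$ immediately.

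The main obstacle here is not conceptual but bookkeeping: one must verify that the constant $2.51$ genuinely suffices across \emph{all} $n\ge 2$, which requires (a) a careful statement of the effective Mertens bound with its precise range of validity, and (b) a finite verification over the primorials $P_k$ for small $k$ (equivalently, over all $n$ below the asymptotic threshold), checking that the claimed inequality is not violated. Since Rosser and Schoenfeld carried out exactly this computation, the cleanest route is simply to cite their result for the first inequality and supply only the short deduction of the second; I would not reproduce the primorial computation in full.
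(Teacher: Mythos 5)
Your proposal matches the paper's treatment: the paper offers no proof at all, simply quoting the first inequality from Rosser and Schoenfeld \cite{RS}, and the second assertion follows exactly by your computation, since $n\ge 10^{12}$ gives $\log\log n\ge \log(12\log 10)\approx 3.32$, whence $1+2.51/\log\log n\le 1.76\le 1.8$. Your sketch of how Rosser--Schoenfeld establish the first bound (reduction to primorials plus an effective Mertens product) is a reasonable outline but, as you yourself conclude, citing \cite{RS} is the intended route, so there is nothing further to check.
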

The following is due to Robin~\cite{Robin83}.
\begin{lemma}
\label{lem:robin}
For any integer $n\ge 3$ we have 
$$\omega(n)\le 1.3841\frac{\log{n}}{\log\log{n}}.$$
\end{lemma}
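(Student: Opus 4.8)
The inequality is Robin's \cite{Robin83}, so in the paper it is simply quoted; for orientation we indicate how an explicit bound of this shape is obtained. The plan is to reduce the problem to \emph{primorials}. If $\omega(n)=k$, then $n$ is divisible by the product of the first $k$ primes, so $n\ge N_k:=\prod_{i\le k}p_i$, and hence $\log n\ge \theta(p_k)$, where $\theta$ denotes the Chebyshev function. Since $x\mapsto \log x/\log\log x$ is increasing for $x>e^e$, and $N_k\ge 30>e^e$ for every $k\ge 3$, it suffices to establish
\[
  \pi(p_k)=k\le 1.3841\,\frac{\theta(p_k)}{\log\theta(p_k)}
\]
for all $k\ge 3$, together with a direct check of the finitely many $n$ with $\omega(n)\le 2$ or $n\le e^e$ (all of which are immediate once one notes $\log\log n>0$ already for $n\ge 3$).

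Next I would split according to the size of $p_k$. For $p_k$ beyond an explicit threshold $Q$, one inserts explicit Chebyshev and prime-counting estimates of Rosser--Schoenfeld type \cite{RS} of the form $\theta(q)>q(1-c_1/\log q)$ and $\pi(q)<\tfrac{q}{\log q}(1+c_2/\log q)$; since $\theta(q)\sim q$ and $\pi(q)\log q/q\to 1$, the quantity $\pi(q)\log\theta(q)/\theta(q)$ tends to $1$, comfortably below $1.3841$, and one checks the error terms keep the strict inequality for all $q>Q$. For $p_k\le Q$ — equivalently, $k$ below a fixed bound — the inequality is verified by direct computation; the ratio $k\log\log N_k/\log N_k$ attains its maximum around $k=9$, i.e.\ at $n=223{,}092{,}870$, where it is just under $1.3841$, which is precisely why that constant appears.

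The main obstacle is quantitative rather than conceptual: one must choose the cut-off $Q$ large enough that the explicit prime-counting inequalities are sharp enough to close the gap to the constant $1.3841$ uniformly, yet small enough that the residual finite verification remains feasible. Once such a $Q$ is fixed, what remains is a routine finite computation and the bookkeeping of the error terms. For the present application none of this needs to be reproduced: we use Lemma~\ref{lem:robin} in the stated form as a black box from \cite{Robin83}.
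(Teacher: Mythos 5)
The paper offers no proof of this lemma at all—it is quoted verbatim from Robin \cite{Robin83}—and your decision to use it as a black box, together with a sketch that mirrors Robin's own method (reduction to primorials, explicit Rosser--Schoenfeld-type estimates for large $p_k$, and a finite check with the extremal ratio at $n=2\cdot3\cdots23=223{,}092{,}870$), is consistent with what the paper does and with the cited source. Your outline is sound (the only slip, that the $n$ with $\omega(n)\le 2$ are ``finitely many,'' is harmless since $\log n/\log\log n\ge e$ for $n\ge 3$ handles all of them uniformly), so there is nothing to correct relative to the paper.
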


\section{Proofs of the theorems}\label{proofsection}

We require two lemmas for the proof of Theorem~\ref{upperboundtheorem}.

\begin{lemma}\label{L:1}
Let $T$ be a subset of $[1,p)$ consisting of $p$th powers modulo $p^2$.
Then
$$\frac{|T|^4}{p}\le \#\{t_1,t_2,t_3,t_4\in T \ : \ t_1t_2=t_3t_4\}.$$
\end{lemma}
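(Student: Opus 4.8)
The statement is a standard "multiplicative energy is large" bound, and the natural tool is Cauchy--Schwarz combined with the pigeonhole-type observation that the set $T$ of $p$th powers modulo $p^2$ is small — it has size at most $p-1$ — yet it lives inside a group structure. Let me think about what's really being claimed. The $p$th powers modulo $p^2$ form a subgroup of $(\Z/p^2\Z)^\times$ of index $p$ (since the group is cyclic of order $p(p-1)$, and raising to the $p$th power has image of size $p-1$). The set $T$ consists of representatives in $[1,p)$ of *some* of these residue classes. Write $E(T) = \#\{(t_1,t_2,t_3,t_4)\in T^4 : t_1t_2=t_3t_4\}$ for the multiplicative energy. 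The claim is $E(T)\ge |T|^4/p$.

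The plan is as follows. First I would count, for each residue class $c$ modulo $p^2$, the number $r(c) = \#\{(t_1,t_2)\in T^2 : t_1t_2\equiv c \pmod{p^2}\}$. Then $E(T) = \sum_c r(c)^2$, while $\sum_c r(c) = |T|^2$. By Cauchy--Schwarz, $E(T) \ge (\sum_c r(c))^2 / \#\{c : r(c)\neq 0\} = |T|^4 / \#\{c: r(c)\neq 0\}$. So it suffices to show that the products $t_1t_2 \bmod p^2$ take at most $p$ distinct values. Now the key point: since each $t_i$ is a $p$th power modulo $p^2$, every product $t_1t_2$ is also a $p$th power modulo $p^2$, hence lies in the subgroup of size $p-1$. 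Therefore $\#\{c : r(c)\neq 0\}\le p-1 < p$, and we conclude $E(T)\ge |T|^4/(p-1)\ge |T|^4/p$. (One should double-check the genuine products $t_1t_2$ over the integers versus their reduction mod $p^2$: since we count $t_1t_2 = t_3t_4$ as an integer identity, $E(T)$ as defined in the lemma is a lower bound for the energy counting congruences mod $p^2$, which only helps us — the displayed inequality uses integer equality, which is the harder direction to satisfy, so the bound via congruences is still valid. Actually I should be careful here: the lemma counts integer equality $t_1t_2=t_3t_4$, and $\#\{c : r(c)\ne 0\}$ should count *integer* products; but every integer product of two elements of $[1,p)$ lies in $[1,p^2)$, so it is determined by its residue mod $p^2$, and that residue is a $p$th power, giving at most $p-1$ possible values — so the argument goes through verbatim with integer equality.)

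The only genuine obstacle is verifying cleanly that the number of distinct products is at most $p$ rather than, say, $p^2$: this rests entirely on the group-theoretic fact that $p$th powers mod $p^2$ form a subgroup of order $p-1$ and are closed under multiplication. Everything else is a one-line Cauchy--Schwarz. I would present the proof in essentially three sentences: define $r(c)$, apply Cauchy--Schwarz to get $E(T)\ge |T|^4/\#\{c:r(c)\ne0\}$, and bound the number of distinct products by $p-1$.
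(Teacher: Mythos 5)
Your proof is correct, and it arrives at the bound by a slightly different arrangement than the paper's. The paper indexes residues modulo $p^2$ as $x+yp$, applies Cauchy--Schwarz over the $p$ values of $x$, thereby lower-bounding the number of quadruples with $t_1t_2\equiv t_3t_4\pmod{p}$ by $|T|^4/p$, and only then invokes the $p$-th power structure: two $p$-th powers modulo $p^2$ that agree modulo $p$ agree modulo $p^2$, and since both products are less than $p^2$ this forces integer equality. You instead apply Cauchy--Schwarz over the support of the representation function $r(c)$ modulo $p^2$ and use the structure at that point: products of elements of $T$ are $p$-th power units, hence lie in the subgroup of $(\Z/p^2\Z)^\times$ of order $p-1$, so the support has at most $p-1$ elements; and, as you correctly note after your initial hesitation, products of two elements of $[1,p)$ lie in $[1,p^2)$, so congruence modulo $p^2$ among such products is the same as integer equality, making your energy count exactly the quantity in the lemma. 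The two structural inputs (injectivity of reduction modulo $p$ on the $p$-th power subgroup, versus that subgroup being multiplicatively closed of order $p-1$) are equivalent facts about the same subgroup, but your arrangement is a bit shorter and gives the marginally stronger constant $|T|^4/(p-1)$, whereas the paper's arrangement passes through the intermediate mod-$p$ energy statement in the form used by Cohen, Odoni, and Stothers. Either version suffices for Theorem~\ref{upperboundtheorem}.
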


\begin{proof}
For $1\le \lambda\le p^2$ let $r(\lambda)$ count the number of solutions to the equation
$$xy\equiv \lambda \pmod{p^2}, \quad x,y\in T.$$
We have 
$$|T|^2=\sum_{1\le \lambda \le p^2}r(\lambda)=\sum_{1\le x \le p}\sum_{1\le y \le p}r(x+yp).$$
By the Cauchy--Schwarz inequality 
\begin{align*}
|T|^4\le p\sum_{1\le x \le p}\left(\sum_{1\le y \le p}r(x+yp) \right)^2=p\sum_{\substack{1\le x \le p \\ 1\le y_1,y_2 \le p}}r(x+y_1p)r(x+y_2p).
\end{align*}
For fixed $1\le x,y_1,y_2\le p$ the term
$$r(x+y_1p)r(x+y_2p),$$
counts the number of solutions to the system of congruences
$$t_1t_2\equiv x+y_1p\pmod{p^2}, \quad t_3t_4\equiv x+y_2p\pmod{p^2}, \quad t_1,t_2,t_3,t_4 \in T.$$
Note that a solution to this system also satisfies
$$t_1t_2-t_3t_4\equiv 0 \pmod{p}.$$
Averaging over $x,y_1,y_2$, we see that 
$$\sum_{\substack{1\le x \le p \\ 1\le y_1,y_2 \le p}}r(x+y_1p)r(x+y_2p)=\#\{ t_1,t_2,t_3,t_4\in T_1 \ : \  t_1t_2\equiv t_3t_4 \pmod{p}\},$$
and hence 
$$\frac{|T|^4}{p}\le \#\{ t_1,t_2,t_3,t_4\in T_1 \ : \  t_1t_2\equiv t_3t_4 \pmod{p}\}.$$ 
As observed in Cohen, Odoni, and Stothers, since $T$ is set of $p$-th powers,
we have that 
$$t_1t_2\equiv t_3t_4 \pmod{p},$$ implies $t_1t_2=t_3t_4$
and hence 
$$\frac{|T|^4}{p}\le \#\{t_1,t_2,t_3,t_4\in T_1 \ : \ t_1t_2=t_3t_4\}.$$
\end{proof}
Cohen, Odoni, and Stothers now proceed to obtain a bound on $\tau_{k}(n)$, the number of ways of writing $n$ as a product of $k$ integers. Proceeding in this way, even with the sharp bounds for $\tau_{k}(n)$ given by Duras \cite{Duras} leads to $h(p)< p$ only for $p> \exp(\exp(58))$.
To reduce this bound we note that $\tau_{k}(n)$ is large only when $n$ is highly composite. If we remove some small prime factors, the bounds improve markedly. We do this below for $\tau(n) = \tau_{2}(n)$.

\begin{lemma}\label{L:2}
Suppose $n\in\Z^+$ is not divisible by any prime $p<z$.
%Let $n$ be an integer and $z$ a real number such that if $p|n$ then $p\ge z$.
If $0<\eps<1$ and $z\geq 2^{1/\eps}$, then $\tau(n)\leq n^\eps$.
\end{lemma}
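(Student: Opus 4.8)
The plan is to bound the multiplicative function $\tau(n)/n^{\eps}$ by working prime-by-prime, exploiting the hypothesis that $n$ has no prime factor below $z$. Write $n=\prod_{i} q_i^{a_i}$ with each $q_i\geq z$. Since $\tau$ is multiplicative, $\tau(n)/n^{\eps}=\prod_i (a_i+1)/q_i^{a_i\eps}$, so it suffices to show that each local factor satisfies $(a+1)/q^{a\eps}\leq 1$, i.e. $a+1\leq q^{a\eps}$, for every prime $q\geq z$ and every exponent $a\geq 1$. Taking the product over $i$ then immediately gives $\tau(n)\leq n^{\eps}$.

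First I would reduce the claim $a+1\leq q^{a\eps}$ to the worst case. Since $q\geq z\geq 2^{1/\eps}$, we have $q^{\eps}\geq 2$, so it is enough to prove $a+1\leq 2^{a}$ for all integers $a\geq 1$. This is an elementary induction: it holds with equality at $a=1$, and if $a+1\leq 2^a$ then $a+2\leq 2^a+1\leq 2^{a+1}$. Hence $a+1\leq 2^a\leq (q^{\eps})^a=q^{a\eps}$, so each local factor is at most $1$, as required.

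The only mild subtlety — and the step I would present most carefully — is the passage from the hypothesis $z\geq 2^{1/\eps}$ to the inequality $q^{\eps}\geq 2$: one needs $q\geq z$ together with $z^{\eps}\geq 2$, and the latter follows from $z\geq 2^{1/\eps}$ by raising both sides to the $\eps$-th power (valid since $\eps>0$ and $z>1$). Everything else is a routine combination of multiplicativity and the elementary bound $a+1\leq 2^a$; there is no real obstacle here, and the main point worth emphasizing is simply that stripping off the small prime factors is exactly what makes the per-prime bound $(a+1)\leq q^{a\eps}\leq 1\cdot q^{a\eps}$ cheap, whereas without it the prime $q=2$ would force $\eps$ to be much larger.
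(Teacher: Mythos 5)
Your proposal is correct and follows essentially the same argument as the paper: factor $n$ into primes $q_i\geq z$, use multiplicativity to reduce to the local bound $(a+1)/q^{a\eps}\leq(a+1)/2^{a}\leq 1$, which the paper also derives from $q^{\eps}\geq z^{\eps}\geq 2$. No differences worth noting.
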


\begin{proof}
We follow ideas from the classic argument. Suppose $n$ has prime factorization
$$n=p_1^{\alpha_1}\dots p_k^{\alpha_k},$$
with each $p_i\ge z$. Let $0<\eps<1$ be some parameter
and consider 
\begin{align*}
\frac{\tau(n)}{n^{\eps}}=\prod_{i=1}^{k}\frac{\alpha_i+1}{p_i^{\eps \alpha_i}}.
\end{align*}
If $p_{i}\geq z\ge 2^{1/\eps}$ then 
\begin{align*}
\frac{\alpha_{i}+1}{p_{i}^{\eps \alpha_{i}}}\le \frac{\alpha_{i}+1}{2^{\alpha_{i}}}\le 1,
\end{align*}
whence the lemma follows.
\end{proof}

\begin{proof}[Proof of Theorem~\ref{upperboundtheorem}]
For ease of notation, write $T=T_2(X)$.
By Lemma~\ref{L:1}, 
\begin{align*}
\frac{|T|^4}{p}\le \#\{t_1,\dots,t_4\in T \ : t_1t_2=t_3t_4\}\le |T|^2\max_{n\in TT}\tau(n),
\end{align*}
where $TT$ denotes the product set of $T$. If $t_1,t_2\in T$ then $(t_1t_2,P_z)=1$, and hence by Lemma~\ref{L:2}
we have $\max_{n\in T^2}\tau(n)\leq n^\eps$ for every $n\in TT$.
Since $T\subseteq[1,X]$, it follows that
\begin{align*}
|T|^2\le pn^{\eps}\le p X^{2\eps}.
\end{align*}
\end{proof}

\begin{proof}[Proof of Theorem~\ref{lowerboundtheorem}]
We sum the inequality from Lemma~\ref{lemmasieve} over $n\leq x$ such that $(n, u)=1$
to obtain
\begin{eqnarray*}
\frac{1}{\delta\theta(e)}\sum_n f(n)
\geq
\sum_n 1
+\frac{1}{\delta}\sum_{i=1}^s
\theta(p_i)
\sum_{\substack{d|e}}
\frac{\mu(p_i d)}{\phi(p_i d)}
%\sum_\chi
\sum_{\substack{\chi\\ \ord(\chi) = p_{i}d}}
\sum_n
\chi(n)
+
\sum_{\substack{d|e\\d>1}}
\frac{\mu(d)}{\phi(d)}
%\sum_\chi
\sum_{\substack{\chi\\ \ord(\chi) = d}}
\sum_n
\chi(n)
\,.
\end{eqnarray*}
The two cumbersome terms on the right taken together are bounded above in absolute value by
\[
  \left(1+\frac{1}{\delta}\sum_{i=1}^s \theta(p_i)\right)\sum_{d|e}|\mu(d)|2^{\omega(u)}A(p)p^{1/2}\log p
\]
The reason for the $2^{\omega(u)}$ is that we have written
\[
  \sum_{\substack{n\leq x\\(n,u)=1}}\chi(n)=\sum_{d|u}\mu(d)\chi(d)\sum_{n\leq x/d}\chi(n)
\]
before applying P\'olya--Vinogradov (see Lemma~\ref{Lap}).  
Finally, we observe that
\[
  \sum_{\substack{n\leq x\\(n,u)=1}}1\geq x\sum_{d|u}\frac{\mu(d)}{d}-2^{\omega(u)}
  \,.
\]
Putting all this together gives the result.
\end{proof}

\begin{proof}[Proof of Theorem~\ref{maintheorem}]
Assume $p\geq 10^{12}$ so that $A(p)\leq 0.16$.
Set $z=16$, $u=P_z=2\cdot 3\cdot 5\cdot 7\cdot 11\cdot 13$, and $X=p^{\alpha}$
where $\alpha\leq 1$.
Theorem~\ref{lowerboundtheorem} gives
\[
  N_1(X)\geq \delta\frac{\phi(e)}{e}\left\{
  0.1918p^\alpha
  -2^6\left[
  0.16
  \left(2+\frac{s-1}{\delta}\right)2^{\omega(p-1)-s}p^{1/2}\log p + 1
  \right]
  \right\}.
\]
In the special case where $s=0$ this becomes
\[
  N_1(X)\geq 
  \frac{\phi(p-1)}{p-1}\left\{
  0.1918p^\alpha
  -64\left[
  0.16
  \cdot
  2^{\omega(p-1)}p^{1/2}\log p + 1
  \right]
  \right\}
  \,.
\]
Set $\eps=1/4$ so that the inequality $2^{1/\eps}\leq z$ is satisfied.
Theorem~\ref{upperboundtheorem} then gives
$N_2(X)\leq p^{1/2+\alpha/4}$.
Recall that we must show that $N_1(X)-N_2(X)>0$.
In light of the above, a sufficient condition is
\begin{align}\label{condition}
\delta\frac{\phi(e)}{e}\left\{
  0.1918p^\alpha
  -2^6\left[
  0.16
  \left(2+\frac{s-1}{\delta}\right)2^{\omega(p-1)-s}p^{1/2}\log p + 1
  \right]
  \right\}
%  \frac{\phi(p-1)}{p-1}\left\{
%  0.1918p^\alpha
%  -64\left[
%  0.16
%  \cdot
%  2^{\omega(p-1)}p^{1/2}\log p + 1
%  \right]
%  \right\}
  >
  p^{1/2+\alpha/4}.
\end{align}
We now set $\alpha=0.99$.
Applying Lemmas~\ref{lem:eulerphi} and~\ref{lem:robin}, we find that condition~(\ref{condition}) is satisfied with $s=0$ when $p\geq 10^{15}$.
Hence we may assume $p<10^{15}$ and hence $\omega=\omega(p-1)\leq 13$.  When $\omega\leq 8$, again
we find that (\ref{condition}) is satisfied with $s=0$.  The remaining cases are $9\leq\omega\leq 13$.
For each of these cases we choose $s=7$, except when $\omega=14$ we choose $s=8$.
We let $p_1,\dots,p_s$ denote the largest $s$ primes dividing $p-1$.
We have the lower bounds $p_i\geq q_{\omega-s+i}$ where $q_i$ denotes the $i$-th prime;
i.e., $q_1,q_2,q_3,\dots=2,3,5,\dots$.  Using $e<p/(p_1\dots p_s)$ we can then bound
$\phi(e)/e\geq (1.8B\log\log e)^{-1}$ from below, where $B=e^\gamma$ is a constant (the most recent $e$ is Euler's constant).
Similarly, using the lower bounds for the $p_i$ we can bound $\delta$ from below.
All this allows us to verify that (\ref{condition}) holds when $p\geq 10^{12}$.
\end{proof}

\section{Comments}
Our method is configured to prove that for all $p$ we have $h(p)<p^\alpha$ for some $\alpha<1$:
there was nothing special about the choice of $\alpha=0.99$.
We would like to point out that our proof with essentially no modification
shows that $h(p)<p^{0.87}$ when $p\geq 10^{16}$.
Consequently, knowledge of the exceptions to the equation $g(p)=h(p)$ in the range $10^{12}\leq p\leq 10^{16}$
would instantly improve the exponent $\alpha$ in our result from $0.99$ to $0.87$,
as Grosswald's conjecture for $g(p)$ is already known in this range.
Moreover, careful choices of the parameters and some computation
via a tree algorithm similar to that given in~\cite{MTT} and \cite{JT}
could be used to achieve a result with an even smaller $\alpha$.

In addition, we remark that it may be possible to use the methods here to give an explicit bound on $\hat{h}(p)$,
the least primitive root modulo $2p^{2}$, by following the arguments in Elliot and Murata \cite{EM}.
Likely one could prove something like $\hat{h}(p) < p$ for all $p>3$.  It seems reasonable that even $\hat{h}(p) < p^{1/2}$ for all $p>1171$ may be true.
%KJM:  I checked this up to p=700,000

\section*{Acknowledgements}
We wish to thank Jean-Louis Nicolas for making Duras's  thesis \cite{Duras} known to us. We also with to thank the UNSW Canberra Rector's Visiting Fellowship which enabled the second author to visit the first and third authors in February and March 2019.

\end{document}